\newtheorem{theorem}{Theorem}
\newtheorem{corollary}{Corollary}
\newcommand{\ff}[2]{ \ensuremath{ {#1}^{\underline{#2}} }}
\newcommand{\floor}[1]{ \ensuremath{ \left\lfloor #1\right\rfloor}}
\newcommand{\stone}[2]{\ensuremath{\left[\begin{matrix}{#1}\\{#2}\end{matrix}\right]}}
\newcommand{\sttwo}[2]{\ensuremath{\left\{\begin{matrix}{#1}\\{#2}\end{matrix}\right\}}}
\title{Falling Factorials, Generating Functions, and Conjoint Ranking Tables}
\author{Brad Osgood \and William Wu}  
\address{Information Systems Lab\\Stanford University}
\begin{document}

\bibliographystyle{amsplain}

\maketitle   

\begin{abstract}
We investigate the coefficients generated by expressing the falling factorial $\ff{(xy)}{k}$ as a linear combination of falling factorial products $\ff{x}{l}\ff{y}{m}$ for $l,m =1, \dots, k$. Algebraic and combinatoric properties are discussed, some in relation to Stirling numbers. 
\end{abstract}

\section{Introduction}

Let $\ff{x}{k}$ denote the \emph{falling factorial power},
\[
\ff{x}{k}=x(x-1)\ldots(x-k+1)\,,\quad  k\in \mathbb{N},  k \ge 1\,.
\] 
We think of $x \in \mathbb{R}$, but the definition can make sense in more general settings. Problems from discrete Fourier analysis --  distant from the topics considered here -- led us to falling factorial powers of products expressed as
\begin{equation}
\label{eq:main}
\ff{(xy)}{k} = \sum_{l,m=1}^k c^{(k)}_{l,m} \ff{x}{l} \ff{y}{m}\,.
\end{equation}
There is the obvious symmetry $c^{(k)}_{l,m} = c^{(k)}_{m,l}$.  Since $c^{(1)}_{1,1}=1$ the interest begins when $k \ge 2$. 
For example,
\[
\begin{aligned}
\ff{(xy)}{2} &=\ff{x}{1}\ff{y}{2}+\ff{x}{2}\ff{y}{1}+\ff{x}{2}\ff{y}{2}\,,\\
\ff{(xy)}{3}&= \ff{x}{1}\ff{y}{3}+\ff{x}{3}\ff{y}{1}+6\ff{x}{2}\ff{y}{2}+3\ff{x}{2}\ff{y}{3}+3\ff{x}{3}\ff{y}{2}+\ff{x}{3}\ff{y}{3}\,.
\end{aligned} 
\]
Note that 
\[
\begin{aligned}
&c^{(2)}_{1,1}=0\\
&c^{(3)}_{1,1}=0\,,\quad c^{(3)}_{1,2}=c^{(3)}_{2,1}=0\,.
\end{aligned}
\]
and that the coefficients that do appear are positive.

Here are the values for $c^{(9)}_{l,m}$ displayed as a symmetric matrix:
\begin{equation} \label{eq:C9}
\begin{pmatrix}
0 &	0&	0&	0&	0&	0&	0&	0&	1\\
0&	0&	0&	0&	15120&	40320&	24192&	4608&	255\\
0&	0&	10080&	544320&	1958040&	1796760&	588168&	74124&	3025\\
0&	0&	544320&	6108480&	12267360&	7988904&	2066232&	218484&	7770\\
0&	15120&	1958040&	12267360&	18329850&	9874746&	2229402&	212436&	6951\\
0&	40320&	1796760&	7988904&	9874746&	4690350&	965790&	85680&	2646\\
0&	24192&	588168&	2066232&	2229402&	965790&	185766&	15624&	462\\
0&	4608&	74124&	218484&	212436&	85680&	15624&	1260	& 36\\
1&	255&	3025&	7770&	6951&	2646&	462&	36&	1
\end{pmatrix}
\end{equation}


The numbers $c^{(k)}_{l,m}$ 
have a number of interesting properties that are the subject of the present paper.  We found a recurrence relation, several closed-form expressions (which appear rather different from each other), a natural combinatorial interpretation in terms of conjoint ranking tables, and we can extend these results to products of more than two variables. 

All of this was new to us, but we probably should not have been surprised to discover, later, that the combinatorics had essentially been considered in other contexts. To make the approach taken here self-contained and more readable we have rederived (briefly) some of these earlier results, with references. We also mention several questions that we were unable to answer.

We would like to thank Jiehua Chen, John Gill, Michael Godfrey, and Donald Knuth for their comments.

\section{Stirling Numbers and Falling Factorial Powers} \label{section:stirling}

One can solve for the coefficients $c^{(k)}_{l,m}$ in any particular case, but that there should generally be such an expansion emerges from the connection between falling factorial powers, ordinary powers  and Stirling numbers;  see \cite{knuth:concrete}, whose notation we follow. 

In combinatorics one defines the (unsigned) Stirling numbers of the first kind by
\[
\stone{k}{l}
= \text{the number of permutations of $k$ letters which consist of $l$ disjoint cycles.} 
\]
In particular
\begin{equation} \label{eq:stirling-1-special}
\stone{k}{k} = 1\,,\quad \stone{k}{l} = 0 \quad \text{if $k<l$.}
\end{equation} 
For us,  the important fact is
\begin{equation}
\label{eq:stir1-powers}
\ff{x}{k} = \sum_{l=1}^n(-1)^{k-l}\stone{k}{l} x^l\,.
\end{equation}
Stirling numbers of the second kind, denoted by $\sttwo{k}{l}$, are defined by
\[
\sttwo{k}{l} = \text{the number of partitions of a set of $k$ elements into $l$ nonempty subsets}.
\]
Here
\begin{equation} \label{eq:stirling-2-special}
\sttwo{k}{1}=\sttwo{k}{k} = 1\,,\quad \sttwo{k}{l} = 0 \quad \text{if $k<l$.}
\end{equation}
The special values in \eqref{eq:stirling-1-special} and \eqref{eq:stirling-2-special} will come up in  adjusting summation indices. 

Corresponding to \eqref{eq:stir1-powers} one has
\begin{equation}
\label{eq:stir2-powers}
x^k = \sum_{l=1}^n \sttwo{k}{l} \ff{x}{l}.
\end{equation}
We included the phrase `Generating Functions' in the title of the paper because the sequences $\stone{k}{l}$, $\sttwo{k}{l}$, and $c^{(k)}_{l,m}$ each count something and each appears in an expansion in powers, \eqref{eq:stir1-powers}, \eqref{eq:stir2-powers}, and \eqref{eq:main}, much like classical generating functions. 

\bigskip

Proceeding with the derivation of \eqref{eq:main},  from \eqref{eq:stir1-powers} and \eqref{eq:stir2-powers} we have
\begin{eqnarray*}
\ff{(xy)}{k} 
&=&  \sum_{p=1}^k (-1)^{k-p}\stone{k}{p} (xy)^p  =  \sum_{p=1}^k(-1)^{k-p} \stone{k}{p}x^p y^p \\
&=&  \sum_{p=1}^k(-1)^{k-p} \stone{k}{p} \left( \sum_{l=1}^p \sttwo{p}{l} \ff{x}{l} \right)\left(  \sum_{m=1}^p \sttwo{p}{m} \ff{y}{m} \right) \\
&=& \sum_{p=1}^k \sum_{l=1}^p \sum_{m=1}^p  (-1)^{k-p}\stone{k}{p} \sttwo{p}{l}\sttwo{p}{m}\ff{x}{l} \ff{y}{m}  
\end{eqnarray*}
It is now a matter of swapping the summations and using a property of Stirling numbers. First, employing Knuth's version of Iverson's convention for sums we write
\[
\begin{aligned}
 \sum_{p=1}^k \sum_{l=1}^p \sum_{m=1}^p  (-1)^{k-p}&\stone{k}{p} \sttwo{p}{l}\sttwo{p}{m}\ff{x}{l} \ff{y}{m}\\
 & = \sum_{l,m,p}(-1)^{k-p}\stone{k}{p} \sttwo{p}{l}\sttwo{p}{m}\ff{x}{l} \ff{y}{m}\,[1 \le l\le p][1 \le m \le p][1 \le p \le k]\,.
 \end{aligned}
 \]
But now
\[
[1 \le p \le k] [1\le l \le p][1 \le m \le p]=[1 \le l,m\le k][\max(l,m)\le p \le k]\,,
\]
and therefore
\[
 \sum_{p=1}^k \sum_{l=1}^p \sum_{m=1}^p  (-1)^{k-p}\stone{k}{p} \sttwo{p}{l}\sttwo{p}{m}\ff{x}{l} \ff{y}{m} = \sum_{l,m=1}^k \sum_{p=\max(l,m)}^k (-1)^{k-p}\stone{k}{p} \sttwo{p}{l}\sttwo{p}{m}\ff{x}{l} \ff{y}{m} \,.
 \]
Next, since $\sttwo{p}{l}=0$ for $p<l$ and $\sttwo{p}{m}=0$ for $p<m$ we can allow the innermost summation to begin at $p=1$ without affecting the result. Thus
\[
\ff{(xy)}{k} =  \sum_{l=1}^k\sum_{m=1}^k \sum_{p=1}^k (-1)^{k-p}\stone{k}{p} \sttwo{p}{l}\sttwo{p}{m}\ff{x}{l} \ff{y}{m} = \sum_{l,m=1}^k c_{l,m}^{(k)} \ff{x}{l} \ff{y}{m} \,,
\]
where
\begin{equation} \label{eq:c-formula-1}
c_{l,m}^{(k)} = \sum_{p=1}^k (-1)^{k-p}\stone{k}{p}  \sttwo{p}{l} \sttwo{p}{m} \,.
\end{equation}
This establishes the expansion \eqref{eq:main} and provides a formula for the coefficients. It is not clear from this expression that the $c$'s are nonnegative. We will deduce this from a combinatorial characterization in Section \ref{section:combinatorics}.


When $l = k$,
\begin{eqnarray*}
c_{k,m}^{(k)} 
&=& \sum_{p=1}^k (-1)^{k-p}\stone{k}{p} \sttwo{p}{k} \sttwo{p}{m}  \\
&=& \stone{k}{k} \sttwo{k}{k} \sttwo{k}{m} \\
&=& \sttwo{k}{m}\,.
\end{eqnarray*}	
Thus the Stirling numbers of the second kind appear in the last row (or column) of the matrix of the $c$'s, as we see in \eqref{eq:C9}.

There are a few more arithmetic properties of the $c^{(k)}_{l,m}$ that we wish to note, expressed most easily in terms of the symmetric matrix $C^{(k)}$ whose ($l,m$)-entry is $c^{(k)}_{l,m}$. Let $\Sigma^{(k)}$ be the $k \times k$ diagonal matrix whose entries are
\[
\Sigma^{(k)}_{lm} = (-1)^{k-l}\stone{k}{l}\delta_{lm}\,.
\]
Let $S_2^{(k)}$ be the upper triangular matrix with nonzero entries
\[
(S_2^{(k)})_{lm} = \sttwo{m}{l}\,,\quad l \le m\,.
\]
One can check that 
\[
C^{(k)} = S_2^{(k)}\Sigma^{(k)}(S_2^{(k)})^T\,.
\]
In turn it follows that $C^{(k)}$ is invertible and has the same signature as $\Sigma^{(k)}$. Since the diagonal entries in $\Sigma^{(k)}$ alternate sign, $C^{(k)}$ has $\lceil k/2\rceil$ positive eigenvalues and $k - \lceil k/2\rceil$ negative eigenvalues. Moreover, the diagonal entries of $S_2^{(k)}$ are $1$'s so $\det S_2^{(k)} = 1$, and then
\[
\det C^{(k)} = \prod_{l=1}^k(-1)^{k-l}\stone{k}{l}\,.
\]

For all $k$ it appears that the rows (columns), the diagonals, and the anti-diagonals of $C^{(k)}$ are all unimodal; this is quite visible for $k=9$ in \eqref{eq:C9}. Stronger than that, numerical evidence suggests all are log-concave. We are only able to show this for the anti-diagonals,
\[
\dots, c^{(k)}_{l+1,m-1}, c^{(k)}_{l,m},c^{(k)}_{l-1,m+1}, \dots\,.
\]
Recall that a sequence $\{a_i\}$ is log-concave if
\[
a_ia_{i-2} \le a_{i-1}^2\,.
\]
This is easily seen to be equivalent to the condition
\[
a_la_m \le a_{l-1}a_{m+1}\quad \text{for all $m \le l-2$.}
\]
It is known, \cite{stanley1989lca}, that the Stirling numbers $\sttwo{k}{l}$ are log-concave in $l$ for fixed $k$. Thus
\[
\sttwo{p}{l-1}\sttwo{p}{m+1} \le \sttwo{p}{l}\sttwo{p}{m} \quad \text{and}\quad \sttwo{q}{l+1}\sttwo{q}{m-1} \le \sttwo{q}{l}\sttwo{q}{m}\,,
\]
and using this in \eqref{eq:c-formula-1} we have
\[
\begin{aligned}
c^{(k)}_{l-1,m+1}c^{(k)}_{l+1,m-1} & = \sum_{p,q=1}^k (-1)^{k-p}(-1)^{k-q}\stone{k}{p}\stone{k}{q}
\sttwo{p}{l-1}\sttwo{p}{m+1}\sttwo{q}{l+1}\sttwo{q}{m-1}\\
&\le \sum_{p,q=1}^k (-1)^{k-p}(-1)^{k-q}\stone{k}{p}\stone{k}{q}
\sttwo{p}{l}\sttwo{p}{m}\sttwo{q}{l}\sttwo{q}{m}\\
&=(c^{(k)}_{l,m})^2\,,
\end{aligned}
\]
which is what we are required to show.

\section{Recurrence} \label{section:recurrence}

Falling factorial powers satisfy
\[
\Delta \ff{x}{n} = (n-1) \ff{x}{{n-1}}\,,
\]
where $\Delta$ is the forward difference operator. A little more generally,
\[
\begin{aligned}
\Delta(\ff{x}{l}\ff{y}{m})& = (\Delta \ff{x}{l})\ff{y}{m} + \ff{x}{l}(\Delta\ff{y}{m}) + (\Delta \ff{x}{l})(\Delta\ff{y}{m})\\
&= l\ff{x}{{l-1}}\ff{y}{m} + m\ff{x}{l}\ff{y}{{m-1}}+lm\ff{x}{{l-1}}\ff{y}{{m-1}}\,.
\end{aligned}
\]
Then, on the one hand,
\[\begin{aligned}
\Delta \ff{(xy)}{k} &= \sum_{l,m=1}^k c^{(k)}_{l,m}( l\ff{x}{{l-1}}\ff{y}{m} + m\ff{x}{l}\ff{y}{{m-1}}+lm\ff{x}{{l-1}}\ff{y}{{m-1}}) \\ 
&=\sum_{l=0}^{k-1}\sum_{m=1}^k c^{(k)}_{l+1,m}(l+1)\ff{x}{l}\ff{y}{m}  + 
\sum_{l=1}^k \sum_{m=0}^{k-1}c^{(k)}_{l,m+1} (m+1)\ff{x}{l}\ff{y}{m} +  \sum_{l,m=0}^{k-1} c^{(k)}_{l+1,m+1}(l+1)(m+1)\ff{x}{l}\ff{y}{m}\,.
\end{aligned}
\]
On the other hand, apply the identity
\[
\ff{v}{k} = \frac{1}{v}(\ff{v}{{k+1}}+k\ff{v}{k})
\]
 with $v = (x+1)(y+1)$ to write
 \[
 \begin{aligned}
 \Delta \ff{(xy)}{k} &= \ff{((x+1)(y+1))}{k}-\ff{(xy)}{k}\\
 &= \frac{1}{(x+1)(y+1)}\left(\ff{((x+1)(y+1))}{{k+1}}+k\ff{((x+1)(y+1))}{{k}}\right) -\ff{(xy)}{k}\,,
 \end{aligned}
 \]
and then
\[
\begin{aligned}
 \Delta \ff{(xy)}{k} &= \sum_{l,m=1}^{k+1} c^{(k+1)}_{l,m}\frac{\ff{(x+1)}{l}}{x+1}\frac{\ff{(y+1)}{m}}{y+1} + k\sum_{l,m=1}^k c^{(k)}_{l,m}\frac{\ff{(x+1)}{l}}{x+1}\frac{\ff{(y+1)}{m}}{y+1} - \sum_{l,m=1}^k c^{(k)}_{l,m} \ff{x}{l}\ff{y}{m}\\
 &= \sum_{l,m=1}^{k+1}c^{(k+1)}_{l,m}\ff{x}{{l-1}}\ff{y}{{m-1}} + k \sum_{l,m=1}^{k}c^{(k)}_{l,m}\ff{x}{{l-1}}\ff{y}{{m-1}}-\sum_{l,m=1}^k c^{(k)}_{l,m} \ff{x}{l}\ff{y}{m}\\
 &= \sum_{l,m=0}^k c^{(k+1)}_{l+1,m+1}\ff{x}{l}\ff{y}{m} + k\sum_{l,m=0}^{k-1} c^{(k)}_{l+1,m+1}\ff{x}{l}\ff{y}{m} -  \sum_{l,m=1}^k c^{(k)}_{l,m} \ff{x}{l}\ff{y}{m}\,.
 \end{aligned}
 \]

Comparing the two expressions for $ \Delta \ff{(xy)}{k} $, and rearranging terms, we have shown:

\begin{theorem} \label{theorem:recurrence}
The coefficients $c^{(k)}_{l,m}$ satisfy the recurrence
\[
c^{(k+1)}_{l+1,m+1} = c^{(k)}_{l,m}+(l+1)c^{(k)}_{l+1,m}+(m+1)c^{(k)}_{l,m+1}+((l+1)(m+1)-k)c^{(k)}_{l+1,m+1}\,.
\]
\end{theorem}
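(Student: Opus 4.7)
The plan is to exploit the two expressions for $\Delta \ff{(xy)}{k}$ that have just been derived and read off the recurrence by matching coefficients. Since the products $\ff{x}{l}\ff{y}{m}$ for $l,m \ge 0$ are linearly independent as polynomials in $x$ and $y$ (each has a distinct leading monomial $x^l y^m$), any two expansions of the same polynomial in this basis must have equal coefficients. The whole argument reduces to collecting terms of $\ff{x}{l}\ff{y}{m}$ from each side and solving.

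To make the bookkeeping painless I would first extend the convention $c^{(k)}_{l,m} = 0$ whenever $l$ or $m$ is $0$ or exceeds $k$, so that the six sums appearing in the two expressions for $\Delta\ff{(xy)}{k}$ can be written over a common range $0 \le l,m \le k$ without case splits. With this in place, the coefficient of $\ff{x}{l}\ff{y}{m}$ on the ``difference-operator'' side equals
\[
(l+1)c^{(k)}_{l+1,m} + (m+1)c^{(k)}_{l,m+1} + (l+1)(m+1)c^{(k)}_{l+1,m+1},
\]
while on the ``$v=(x+1)(y+1)$'' side it equals
\[
c^{(k+1)}_{l+1,m+1} + k\,c^{(k)}_{l+1,m+1} - c^{(k)}_{l,m}.
\]

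Equating these two expressions and solving for $c^{(k+1)}_{l+1,m+1}$, the two occurrences of $c^{(k)}_{l+1,m+1}$ combine into a single term with coefficient $(l+1)(m+1)-k$, and one recovers exactly the recurrence in the statement. For readers who prefer not to invoke the extended convention, the same identity can be verified for $1\le l,m\le k-1$ directly, and the boundary cases $l=0$, $m=0$, $l=k$, or $m=k$ can be handled by inspection, noting that the missing terms on either side indeed vanish under the convention.

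The only real obstacle is the careful index bookkeeping: each of the six sums has slightly different bounds, and one must confirm that for every $(l,m)$ of interest no term is double-counted or dropped when the sums are aligned. Once the conventions are fixed, the comparison is essentially a single line of algebra, and no additional property of the Stirling numbers or of the $c^{(k)}_{l,m}$ beyond the defining expansion \eqref{eq:main} is needed.
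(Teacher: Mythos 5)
Your proposal is correct and follows exactly the paper's route: the two expansions of $\Delta\ff{(xy)}{k}$ (via the product rule for $\Delta$ and via the identity $\ff{v}{k}=\tfrac{1}{v}(\ff{v}{k+1}+k\ff{v}{k})$ with $v=(x+1)(y+1)$) are compared coefficient by coefficient, which is precisely what the paper does when it says ``comparing the two expressions and rearranging terms.'' Your explicit appeal to the linear independence of the products $\ff{x}{l}\ff{y}{m}$ and the zero-extension convention for out-of-range indices only makes the paper's implicit bookkeeping precise.
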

 
We comment that other proofs of the recurrence are possible, for example one that uses the recurrence relations for Stirling numbers.

\section{Combinatorial Characterizations} \label{section:combinatorics}

Conjoint analysis is a method in marketing that assigns weightings to independent attributes of a product. It is only the first step that we consider, that of setting up a conjoint ranking table. Suppose there are $l\ge 1$ choices for one attribute (e.g. price) and $m\ge 1$ for the other (e.g. color). Form an $l \times m$ table, in which each cell represents the two attributes considered jointly; hence the contraction `conjoint.'  Let $ k \le lm$ and rank the conjoint preferences from $1$ to $k$. 
Since we allow $k \le lm$ not every pair of attributes need have a ranking, but we do insist that every individual attribute must enter into the ranking at least once. That is, every row or  column must have at least one ranked cell. In particular $k \ge \max\{l,m\}$. One can consider the remaining cells as left blank or filled in with zeros.
 
For a given $l$ and $m$ we refer to such an object as a  $k$-\emph{conjoint ranking table}. This is quite a general concept and it is easy to imagine many examples. Here is one other. A graduate student is signing up to take qualifying exams, and is required to fill out an $l \times m$ table indicating preferences. Each of the $l$ rows refers to  a subject area, such as algebra, combinatorics, analysis, etc.. Each of the $m$ columns refers to a particular professor; we assume that they are all equally capable of asking about any of subjects. The student will be asked $k$ questions, where $k \le lm$, and may put the numbers $1$ through $k$ in any of the cells, indicating
 preferences for \textit{who} asks \textit{what kinds} of questions. However, the student is  not allowed to avoid any professors, and must put at least one number in each column. Similarly, the student is not allowed to avoid any subject areas, and must put at least one  number in each row. 

The recurrence relation allows us to derive a combinatorial  interpretation and characterization of the $c$'s as counting the number of such tables.

\begin{theorem} \label{theorem:conjoint-count}
The number of  $k$-conjoint ranking tables of size $l\times m$ is $l!\,m!\,c^{(k)}_{l,m}$.
\end{theorem}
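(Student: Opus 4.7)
The plan is an induction on $k$ via the recurrence of Theorem \ref{theorem:recurrence}. Write $T(k,l,m)$ for the number of $k$-conjoint ranking tables of size $l\times m$, with $T(k,l,m)=0$ when $l=0$ or $m=0$ by convention. Multiplying the recurrence of Theorem \ref{theorem:recurrence} through by $(l+1)!(m+1)!$ converts the claim $T(k,l,m)=l!\,m!\,c^{(k)}_{l,m}$ into the equivalent recurrence
\[
T(k+1,l+1,m+1) = (l+1)(m+1)\bigl[T(k,l,m)+T(k,l+1,m)+T(k,l,m+1)\bigr] + \bigl((l+1)(m+1)-k\bigr)\,T(k,l+1,m+1).
\]
The base case $k=1$ is immediate: a single label can meet the row and column non-emptiness requirements only on a $1\times 1$ grid, so $T(1,l,m)=\delta_{l,1}\delta_{m,1}$, matching $l!\,m!\,c^{(1)}_{l,m}$. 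It remains to establish the displayed recurrence combinatorially.

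To do so, I would take an arbitrary $(k+1)$-conjoint ranking table of size $(l+1)\times(m+1)$, locate the unique cell $(i,j)$ that holds the maximum label $k+1$, delete this label, and partition according to whether row $i$ and column $j$ still contain a labeled cell: \textbf{(A)} both row $i$ and column $j$ become empty; \textbf{(B)} only row $i$ becomes empty; \textbf{(C)} only column $j$ becomes empty; \textbf{(D)} neither becomes empty. In case (A) the remaining $k$ labels occupy the $l\times m$ subgrid obtained by deleting row $i$ and column $j$ and form a valid $k$-conjoint ranking table, so with $(l+1)(m+1)$ choices for $(i,j)$ the contribution is $(l+1)(m+1)\,T(k,l,m)$. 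In case (B) one picks the empty row $i$ in $l+1$ ways, reads off a $k$-conjoint ranking table on the complementary $l\times(m+1)$ subgrid, and then places $k+1$ in any column $j$; because every column of a conjoint ranking table is already non-empty, all $m+1$ positions automatically keep one in case (B), giving $(l+1)(m+1)\,T(k,l,m+1)$. Case (C) is symmetric. In case (D) the $k$ remaining labels themselves form a $k$-conjoint ranking table on the full $(l+1)\times(m+1)$ grid, and $k+1$ may be placed in any of the $(l+1)(m+1)-k$ currently empty cells—the non-emptiness of row $i$ and column $j$ is automatic—contributing $\bigl((l+1)(m+1)-k\bigr)\,T(k,l+1,m+1)$. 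Adding the four disjoint contributions reproduces the recurrence.

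The delicate point is the observation flagged in case (B) (and its mirror image in (C)): \emph{every} one of the $m+1$ columns is a legal position for $k+1$ precisely because the conjoint-table condition on the reduced $l\times(m+1)$ grid already guarantees that each of its columns is non-empty; without this, one would be left with an awkward restriction to non-empty columns that would spoil the tidy form of the recurrence. Once the four cases are seen to be mutually exclusive and exhaustive, the decomposition is manifestly bijective (inverted by simply inserting $k+1$ into the specified cell of the chosen sub-table), so the induction closes. As a by-product, the argument confirms the nonnegativity of $c^{(k)}_{l,m}$ that was anticipated after formula \eqref{eq:c-formula-1}.
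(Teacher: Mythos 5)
Your proof is correct and follows essentially the same route as the paper: the same four-way case split on whether the maximal label is alone in its row and/or column, matched term-by-term against the recurrence of Theorem \ref{theorem:recurrence}, closed by a base-case check. The only difference is cosmetic but mildly advantageous: you count tables directly via the rescaled recurrence for $T(k,l,m)=l!\,m!\,c^{(k)}_{l,m}$, whereas the paper counts equivalence classes under row and column permutations and multiplies by $l!\,m!$ at the end --- a step that tacitly requires every class to have exactly $l!\,m!$ members (true, since the distinctness of the labels forces the stabilizer to be trivial, but left unremarked there).
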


Before giving the proof we observe the following consequence.
\begin{corollary}
The $c^{(k)}_{lm}$ satisfy
\[
\label{eq:positivity-2}
c_{l,m}^{(k)} = 0 \quad  \text{for} \quad  lm <k \quad  \text{and} \quad c_{l,m}^{(k)} > 0\quad \text{for} \quad l m \ge k\,.
\]
\end{corollary}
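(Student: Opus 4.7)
The plan is to deduce the corollary directly from Theorem \ref{theorem:conjoint-count}: since the nonzero scalar $l!\,m!$ times $c^{(k)}_{l,m}$ counts the $k$-conjoint ranking tables of size $l\times m$, vanishing (resp.\ positivity) of $c^{(k)}_{l,m}$ is equivalent to the nonexistence (resp.\ existence) of such a table. The indices in \eqref{eq:main} range over $l,m\in\{1,\ldots,k\}$, so $\max(l,m)\le k$ is implicit throughout.

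For the case $lm<k$, the argument is immediate: a $k$-conjoint ranking table deposits $k$ distinct ranks into $lm$ cells with at most one rank per cell, and when $lm<k$ this is impossible, so the count is zero.

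For the case $lm\ge k$, I would exhibit one valid table. Taking $l\le m$ without loss of generality, I would put the rank $i$ into cell $(i,i)$ for $i=1,\ldots,l$, thereby using ranks $1,\ldots,l$ and covering every row as well as the first $l$ columns; then place ranks $l+1,\ldots,m$ along row $1$ in the cells $(1,l+1),\ldots,(1,m)$ to cover the remaining columns. At that point every row and column contains a rank, $m$ distinct ranks have been placed into $m$ distinct cells, and the hypothesis $lm\ge k$ ensures that the $lm-m=(l-1)m\ge k-m$ empty cells can absorb the remaining $k-m$ ranks arbitrarily. This yields a $k$-conjoint ranking table, so $c^{(k)}_{l,m}>0$.

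No serious obstacle is anticipated; the main care is just to verify that the construction respects both the at-most-one-per-cell rule and the row/column coverage requirement, including the degenerate $l=m$ case where the column-filling step is vacuous. The combinatorial reformulation from Theorem \ref{theorem:conjoint-count} is what makes the argument transparent.
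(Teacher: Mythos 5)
Your proposal is correct and follows the same route as the paper: both deduce the corollary from Theorem \ref{theorem:conjoint-count}, with $lm<k$ ruling out any table by a counting of cells and $lm\ge k$ handled by exhibiting (or, in the paper, merely asserting) at least one valid table using $k\ge\max(l,m)$. Your explicit diagonal-plus-first-row construction is a slightly more careful version of the existence step the paper states without detail, and it checks out.
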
 

This is so because when $lm < k$, the conjoint ranking table is too small to fit all $k$ numbers and so $c^{(k)}_{l,m}=0$. However, when $lm \geq k$, there are enough cells in the table to be ranked from $1$ to $k$. Furthermore, there must exist at least one way of placing the numbers so as to satisfy the row and column constraints since the conditions $1 \leq l,m \leq k$ imply that $k \geq \max(l,m)$. Thus $c^{(k)}_{l,m} >0$.

One can see the pattern of zeros  for $k=9$ in \eqref{eq:C9}. The parabolic shape of the boundary between the zero and nonzero coefficients becomes more pronounced as $k$ increases.

\begin{proof}[Proof of Theorem \ref{theorem:conjoint-count}]
To prove the theorem we first define an equivalence relation on the set of $k$-conjoint ranking tables, namely two tables of the same size are equivalent if one is obtained from the other by permuting the rows, or columns, or both. Let  $d_{l,m}^{(k)}$ be the number of equivalence classes. We show that  $d_{l,m}^{(k)} = c_{l,m}^{(k)}$ by showing that the same recurrence relation obtains. 

The following four cases are properties of any table in an equivalence class:
\begin{enumerate}
\item $k$ is alone in both its row and column; 
\item $k$ is alone in its row but not its column;
\item $k$ is alone in its column but not its row;
\item $k$ is  neither alone in its row nor its column. 
\end{enumerate}
These cases are illustrated in Figure \ref{fig-combinatorial}. We count the number of equivalence classes by computing how much each case contribute to $d_{l,m}^{(k)}$.

\begin{figure} [ht]
\centering
\includegraphics{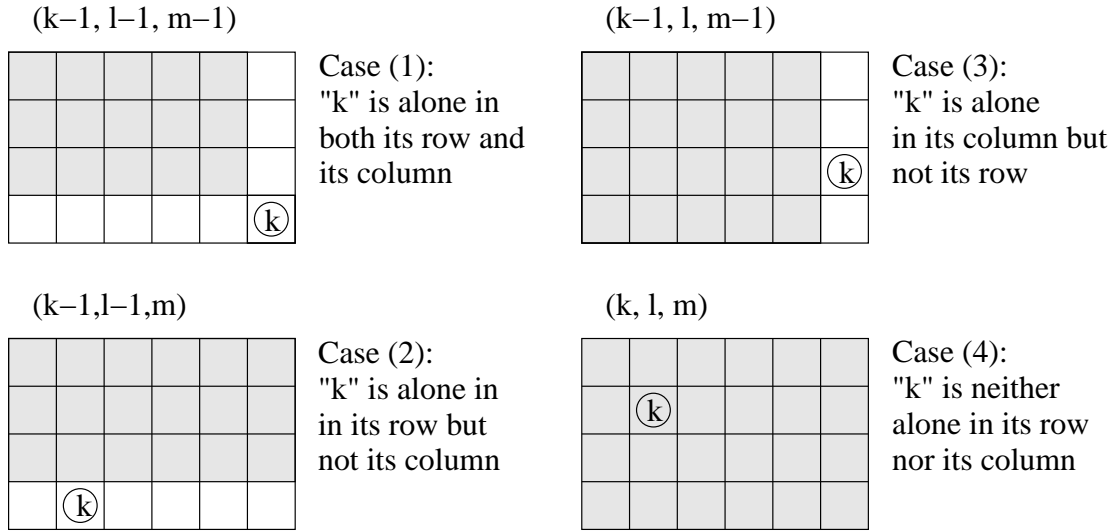}
\caption{Diagrams illustrating the four cases in the proof of Theorem \ref{theorem:conjoint-count}.}
\label{fig-combinatorial}
\end{figure}

\begin{itemize}

\item[] Case (1): If we  remove both the row and column containing $k$, then we are left with a $(k-1)$-conjoint ranking table of size $(l-1)\times (m-1)$. Hence this case contributes  $d_{l-1,m-1}^{(k-1)}$.

\item[] Case (2): If we remove the row containing $k$ then what remains is a $(k-1)$-conjoint ranking table of size $(l-1)\times m$. Since the number $k$ could have been in any of the $m$ slots of the row just removed, the contribution is $m d_{l-1,m}^{(k-1)}$. 

\item[] Case (3): The analysis is identical to that of Case (2) and contributes $ld^{(k-1)}_{l,m-1}$.

\item[] Case (4): Whereas in previous cases we removed slices of a $k$-conjoint ranking table, now we will replace a zeroed entry of a $(k-1)$-conjoint ranking table of size $l\times m$ with $k$. There are $lm - (k-1)$ zeroed entries in such a table, so this case contributes  $(lm - (k-1)) d_{l,m}^{(k-1)}$. 
\end{itemize}
Adding the contributions from each of the four cases yields 
$$d_{l,m}^{(k)} = d_{l-1,m-1}^{(k-1)} + m d_{l-1,m}^{(k-1)} + l d_{l,m-1}^{(k-1)} + (lm - (k-1)) d_{l,m}^{(k-1)}$$
which is the same recurrence as for $c_{l,m}^{(k)}$ (reindexed). Considering small tables verifies that $d_{l,m}^{(k)}$ starts out like $c_{l,m}^{(k)}$, and the two quantities are therefore equal.

Since we allowed for permuting $l$ rows and $m$ columns, the number of $k$-conjoint ranking tables of size $l \times m$ is $l!\,m!\,c^{(k)}_{l,m}$.

\end{proof}

Here we make contact with earlier work, for the core of the proof above is counting a set of binary matrices, and these have been counted in other ways; see Chapter 9 in \cite{charalambides:combinatorics}. One approach is to use the inclusion-exclusion principle. We would like to give this argument to show how it leads to another expression for $c^{(k)}_{l,m}$ (which also appears in older literature, but not as interpreted here).

Fix $l$ and $m$, take $k \le lm$, and let $\mathcal{O}$ be the set of   binary matrices of size $l \times m$ with $1$'s in exactly $k$ positions. Then
\[
|\mathcal{O}| = \binom{lm}{k}\,.
\]
Now let $k \ge \max\{l,m\}$ and let  $\mathcal{C}$ be the subset of $\mathcal{O}$ which have at least one $1$ in every row and in every column. We want to find $|\mathcal{C}|$.

For the index $i$ running from $1$ to $l$ let $\mathcal{A}_i$ be the subset of $\mathcal{O}$ whose $i$'th row has all $0$'s, and for the index $i$ running from $1$ to $m$ let $\mathcal{A}_{i+l}$ be the subset of $\mathcal{O}$ whose $i$'th column has all $0$'s. Then
\[
\mathcal{C} = \overline{\mathcal{A}_1}\cap\overline{\mathcal{A}_2}\cap\cdots \cap\overline{\mathcal{A}_{l+m}}\,,\]
where
\[
\overline{\mathcal{A}_i} = \mathcal{O}\setminus \mathcal{A}_i\,.
\]
By the inclusion-exclusion principle
\[
|\mathcal{C}| = |\mathcal{O}|-\sum_{i=1}^{l+m} |\mathcal{A}_i| + \sum_{i_1<i_2}|\mathcal{A}_{i_1}\cap\mathcal{A}_{i_2}|-\sum_{i_1<i_2<i_3}|\mathcal{A}_{i_1}\cap\mathcal{A}_{i_2}\cap\mathcal{A}_{i_3}|+\cdots + (-1)^{l+m}|\mathcal{A}_1\cap\mathcal{A}_2\cap\cdots\cap\mathcal{A}_{l+m}|\,.
\]
To compute the general sum
\[
\sum|\mathcal{A}_{i_1}\cap \cdots \cap \mathcal{A}_{i_h}|
\]
we have to have to distinguish the matrices that have zeroed rows from those that have zeroed columns. Suppose among the $h$ sets $\mathcal{A}_{i_1},\dots,\mathcal{A}_{i_h}$ that $p$ of them have zeroed rows. Then $h-p$ have zeroed columns. For a fixed $p$ there are $\binom{l}{p}$ ways to select $p$ rows to zero out and there are $\binom{m}{h-p} = \binom{m}{m-h+p}$ ways to select $h-p$ columns to zero out.  After these choices there are $pm+(h-p)l-p(h-p)$ zeros total, and so there remain $lm-pm-(h-p)l+p(h-p) = (l-p)(m-h+p)$ cells amongst which we place $k$ ones. There are then
\[
\binom{(l-p)(m-h+p)}{k}
\]
ways of doing this. 
Thus
\[
\sum|\mathcal{A}_{i_1}\cap \cdots \cap \mathcal{A}_{i_h}|=\sum_{p=0}^h \binom{l}{p}\binom{m}{m-h+p} \binom{(l-p)(m-h+p)}{k}
\]
and 
\[
|\mathcal{C}| = \sum_{h=0}^{l+m}\sum_{p=0}^h(-1)^h \binom{l}{p}\binom{m}{m-h+p} \binom{(l-p)(m-h+p)}{k}\,.
\]

 Multiplying $|\mathcal{C}|$ by $k!$ distinguishes the nonzero elements, whether they are $k$ distinguished balls tossed into bins or the numbers from $1$ to $k$.  Then dividing by $l!m!$ allows for permuting the rows and columns. The end result is evidently the same as counting the number of $k$-conjoint ranking tables, and hence
\begin{equation}  \label{eq:formula-2}
c^{(k)}_{l,m} = \frac{k!}{l!m!}|\mathcal{C}| = \frac{k!}{l!m!} \sum_{h=0}^{l+m}\sum_{p=0}^h(-1)^h \binom{l}{p}\binom{m}{m-h+p} \binom{(l-p)(m-h+p)}{k}\,.
\end{equation}

There is one more approach and one more formula. The equation \eqref{eq:main} can also be written in the form
\begin{equation} \label{eq:binomial-product}
 \binom{xy}{k}= \sum_{l,m=1}^{k}  b^{(k)}_{l,m} \binom{x}{l}\binom{y}{m}\,,
\end{equation}
where
\[
 b^{(k)}_{l,m} = \frac{l!m!}{k!}c^{(k)}_{l,m}\,.
 \]
We understand the binomial coefficient to be defined for nonintegral $x$ by
\[
\binom{x}{k} = \frac{\Gamma(x+1)}{k!\Gamma(x-k+1)}\,,
\]
and we use
\[
x(x-1)\ldots(x-k+1)=\frac{\Gamma(x+1)}{\Gamma(x-k+1)}\,.
\]

We see that the  $b^{(k)}_{l,m}$ give a count of the number of the $l \times m$ binary matrices with exactly $k$ ones such that each row and column has at least one $1$.  We have a further comment on the combinatorics of \eqref{eq:binomial-product} when $x$ and $y$ are integers. The left-hand side, $\binom{xy}{k}$ is just the number of ways to select $k$ elements from an $x \times y$ array. How does this jibe with the right-hand side?  

For any given selection of $k$ cells there is a unique minimal subarray (smallest number of rows and columns) such that every row and column of that subarray contains a selected cell. This is illustrated in Figure \ref{fig-binomialidentity}. On the left, the darkened cells correspond to a selection of $k=8$ cells, and the arrows indicate the rows and columns of the subarray in which the selected cells are contained. On the right we see the subarray extracted, and note that every row and column contains a selected square. Counting all possible such subarrays thus counts the ways of choosing $k$ elements from the big array. This is what the  right-hand side in equation \eqref{eq:binomial-product}  does, for $\binom{x}{l}\binom{y}{m}$ is the number of ways to choose an $l \times m$ subarray, and then  for each such subarray we count the number of ways to populate it with $k$ entries such that no row or column is void. That multiplier is precisely  $b_{l,m}^{(k)}$. 

\newpage

\begin{figure} [ht]
\centering
\includegraphics{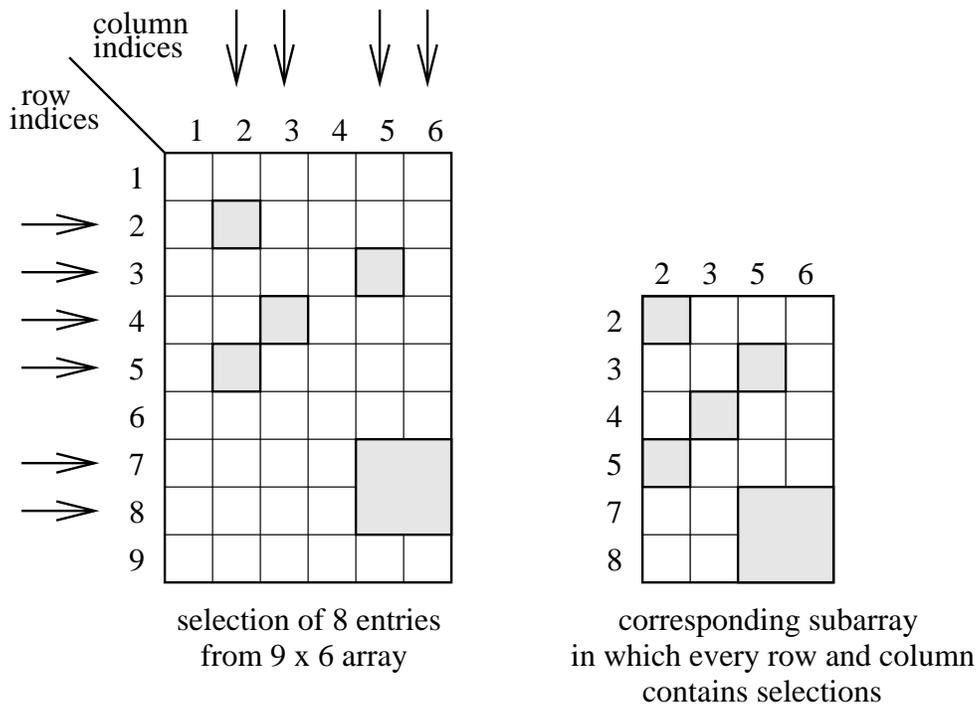}
\caption{Combinatorial Interpretation of  equation \eqref{eq:binomial-product}.}
\label{fig-binomialidentity}
\end{figure}

From Theorem \ref{theorem:recurrence}, the $b$'s satisfy the recurrence
\begin{equation} \label{eq:recurrence-b}
b^{(k+1)}_{l+1,m+1}= (l+1)(m+1)(b^{(k)}_{l,m}+b^{(k)}_{l,m+1}+b^{(k)}_{l+1,m})+((l+1)(m+1)-k)b^{(k)}_{l+1,m+1}\,.
\end{equation}
But explicitly in \cite{cameron:asymptotics} it was noted that M\"obius inversion applied to \eqref{eq:binomial-product} implies
\begin{equation} \label{eq:formula-3}
b^{(k)}_{l,m} = \sum_{s,t=1}^k (-1)^{l+s+m+t}\binom{l}{s}\binom{m}{t}\binom{st}{k}\,.
\end{equation}
Furthermore, a similar formula seems also to be found in \cite{maia-mendez:species}. M\"obius inversion is kin to matrix inversion and it is interesting to see how the latter can be used to derive \eqref{eq:formula-3}. 

For $k^2$ pairs $(x_i,y_i)$, $i = 1, \dots ,k^2$, we treat \eqref{eq:binomial-product} as a system of linear equations for the $k^2$ unknowns $b^{(k)}_{l,m}$. In matrix form, $A \underline{\beta}=\underline{\xi}$ where 
\[
\begin{aligned}
& \underbrace{\left[
\begin{array}{cccccccc}
\binom{x_1}{1}\binom{y_1}{1} & \binom{x_1}{1}\binom{y_1}{2} & \cdots & \binom{x_1}{1}\binom{y_1}{k} & \binom{x_1}{2}\binom{y_1}{1} & \binom{x_1}{2}\binom{y_1}{2} & \cdots & \binom{x_1}{k}\binom{y_1}{k}\\
\binom{x_2}{1}\binom{y_2}{1} & \binom{x_2}{1}\binom{y_2}{2} & \cdots & \binom{x_2}{1}\binom{y_2}{k} & \binom{x_2}{2}\binom{y_2}{1} &\binom{x_2}{2}\binom{y_2}{2} & \cdots & \binom{x_2}{k}\binom{y_2}{k} \\
\vdots & \vdots &\cdots & \vdots & \vdots &\vdots & \vdots & \vdots\\
\binom{x_{k^2}}{1}\binom{y_{k^2}}{1} & \binom{x_{k^2}}{1}\binom{y_{k^2}}{2} & \cdots &\binom{x_{k^2}}{1}\binom{y_{k^2}}{k} & \binom{x_{k^2}}{2}\binom{y_{k^2}}{1} &  \binom{x_{k^2}}{2}\binom{y_{k^2}}{2}  & \cdots & \binom{x_{k^2}}{k}\binom{y_{k^2}}{k}
\end{array}
\right]}_A
\underbrace{\left[ \begin{array}{c}
b^{(k)}_{1,1} \\
b^{(k)}_{1,2} \\
\vdots \\
b^{(k)}_{1,k}\\
b^{(k)}_{2,1}\\
b^{(k)}_{2,2}\\
\vdots\\
b^{(k)}_{L(i),M(i)}\\
\vdots\\
b^{(k)}_{k,k}
\end{array} \right]}_{\underline{\beta}}
\\
& \hspace{3in} =
\underbrace{\left[ \begin{array}{c}
{{x_1 y_1} \choose k} \\
{{x_2 y_2} \choose k} \\
\vdots \\
{{x_iy_i}}\choose{k}\\
\vdots\\
{{x_{k^2} y_{k^2}} \choose k} 
\end{array} \right]}_{\underline{\xi}}
\end{aligned}
\]
Here $L(i)$ and $M(i)$ are mappings that convert from linear indexing to matrix indexing:
\[
L(i) = \left\lfloor\frac{i-1}{k}\right\rfloor +1\,,\quad M(i) = ((i-1)\mod k)+1\,,\quad i = 1, \dots, k^2\,.
\]
To go the other way,
\[
i=k(l-1)+m\,,\quad l,m = 1,\dots, k\,.
\]
Thus, succinctly,
\[
A_{i,j} = \binom{x_i}{L(i)}\binom{y_i}{M(j)}\,,\quad \underline{\beta}_i=b^{(k)}_{L(i),M(i)}\,,\quad \underline{\xi}_i=\binom{x_iy_i}{k}\,.
\]

We want to choose the $x_i,y_i$ to make $A$ invertible, and solve for $A^{-1}$. We find that if we take $x_i=L(i)$ and $y_i=M(i)$, so that
\[
A_{i,j} = \binom{L(i)}{L(j)}\binom{M(i)}{M(j)}\,,
\]
then $A$ is invertible with 
\begin{equation} \label{eq:A-inverse}
\det A = 1 \,,\quad A^{-1}_{i,j} = (-1)^{L(i)+L(j)+M(i)+M(j)}A_{i,j}\,.
\end{equation}
This closely mimics the well-known phenomenon exhibited by the Pascal matrix and its inverse:
\[
\label{eq-pascalinversion}
P_{i,j} = {{i} \choose {j}}, \qquad P^{-1}_{i,j} = (-1)^{i+j}{{i} \choose {j}}.
\]
In \cite{ow:pascal} we prove a more general result so we will not give the details behind \eqref{eq:A-inverse} here. Briefly, the first step is to observe that $A$ is the element-wise product (Hadamard product) of the two matrices $A^{(x)}$ and $A^{(y)}$, where 
\[
A^{(x)}_{i,j} = \binom{L(i)}{L(j)}\,,\quad A^{(y)}_{i,j} = \binom{M(i)}{M(j)}\,,\]
and $A^{(x)}$ is, as a whole,  block lower-triangular with $k \times k$ size blocks while in $A^{(y)}$ the separate $k\times k$ blocks are each lower triangular. See Figure \ref{fig-triangularMatrices}. It follows that  $A$ is lower-triangular.

\begin{figure}[htbp]
\begin{center}
\includegraphics[scale=0.65]{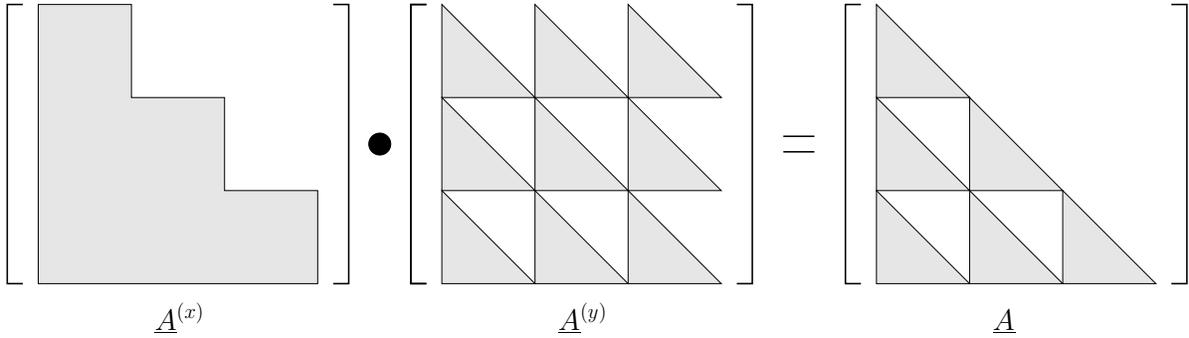}
\caption{Shapes of $A^{(x)}$, $A^{(y)}$ and $A$. The matrix $A$ on the right is the Hadamard product of the matrices on the left.}
\label{fig-triangularMatrices}
\end{center}
\end{figure}

The diagonal elements of $A$ are all $1$ so $\det A=1$. Defining $B=(-1)^{L(i)+L(j)+M(i)+M(j)}A_{i,j}$ one then shows directly that $BA$ is the identity matrix, and this requires some special identities for binomial coefficients. 

From the formula for $A^{-1}$ let us derive \eqref{eq:formula-3}. We have 
\[
\underline{\beta}= A^{-1}\underline{\xi}\,,\quad 
b^{(k)}_{l,m}  = \beta_i\,,\quad \text{where $i=k(l-1)+m$,}
\]
or
\[
\begin{aligned}
b^{(k)}_{l,m} &= \beta_i = \sum_{j=1}^{k^2} A_{i,j}^{-1}\xi_j = \sum_{j=1}^{k^2} A_{i,j}^{-1}\binom{L(j)M(j)}{k}\\
&=  \sum_{j=1}^{k^2} (-1)^{L(i)+L(j)+M(i)+M(j)}A_{i,j}\binom{L(j)M(j)}{k}\\
&=  \sum_{j=1}^{k^2} (-1)^{L(i)+L(j)+M(i)+M(j)}\binom{L(i)}{L(j)}\binom{M(i)}{M(j)}\binom{L(j)M(j)}{k}\,.
\end{aligned} 
\]

For $L(i)$ and $M(i)$, using  $1 \leq m \leq k$ it is easy to verify that
\begin{align*}
L(i) &= \floor{\frac{i-1}{k}} + 1 = \floor{\frac{k(l-1) + m -1}{k}} + 1 =  l \\
M(i) &= ((i-1) \mod k) + 1 = ((k(l-1) + m-1) \mod k) + 1 = m.
\end{align*}
To simplify the $L(j)$ and $M(j)$ terms, express $j$ as the combination
\[
j = ks + t, \qquad s \in \{0,1,\ldots,k-1\}, \quad t \in \{ 1,2,\ldots,k\}\,.
\]
Then just as above,
\begin{align*}
L(j)  &= L(ks + t) = \floor{\frac{ks+t-1}{k}} +1  = \floor{s + \frac{t-1}{k}} + 1 = s+1 \\
M(j) &=M(ks+t) = ((ks+t-1) \mod k) + 1 = (t-1) \mod k + 1 = t-1 +1 = t.
\end{align*}
Hence,
\begin{align*}
b_{l,m} &= \sum_{j=1}^{k^2} (-1)^{L(i) + L(j) + M(i) + M(j)}  {{L(i)} \choose {L(j)}} {{M(i)} \choose {M(j)}}  {{L(j) M(j)} \choose k} \\
&= \sum_{s=0}^{k-1} \sum_{t=1}^k (-1)^{L(i) + L(ks+t) + M(i) + M(ks+t)}  {{L(i)} \choose {L(ks+t)}} {{M(i)} \choose {M(ks+t)}}  {{L(j) M(ks+t)} \choose k} \\
&= \sum_{s=0}^{k-1} \sum_{t=1}^k (-1)^{l + s+1 + m + t}  {{l} \choose {s+1}} {{m} \choose {t}}  {{(s+1)t} \choose k} \\
&= \sum_{s=1}^{k} \sum_{t=1}^k (-1)^{l + s + m + t}  {{l} \choose {s}} {{m} \choose {t}}  {{st} \choose k}\,,
\end{align*}
which is \eqref{eq:formula-3}.

\bigskip

We conclude this part of the discussion by noting that we have three different expressions for the $c$'s (or the $b$'s), \eqref{eq:c-formula-1}, \eqref{eq:formula-2} and \eqref{eq:formula-3} -- and we do not know algebraically  how to derive one from another!

\section{More than two Variables}

For an expansion
\[
\ff{(x_1x_2\cdots x_n)}{k} = \sum_{L} c^{(k)}_L\ff{{x_1}}{{l_1}}\ff{{x_2}}{{l_2}}\cdots\ff{{x_n}}{{l_n}}\,, \quad L = (l_1, l_2, \dots, l_n)\,, 1\le l_i \le k\,,
\]
of a product of more than two variables all the results in the preceding sections have natural extensions. Modifications to the earlier arguments are straightforward and so we  record the outcomes with little additional detail -- the chief problem is notation. We follow the generally accepted conventions on multi-indexing. In particular
\[
L! = l_1!l_2!\cdots l_n! \,.
\]

The formula for the $c$'s in terms of Stirling numbers is
\[
c^{(k)}_L = \sum_{p=1}^k (-1)^{k-p}\stone{k}{p}\prod_{i=1}^n \sttwo{p}{l_i}\,.
\]
or simply
\[
c^{(k)}_L = \sum_{p=1}^k (-1)^{k-p}\stone{k}{p}\sttwo{p}{L}\,.
\]
if we allow ourselves the analog to the the multi-indexed case of binomial coefficients and write
\[
\sttwo{p}{L}=\prod_{i=1}^n \sttwo{p}{l_i}\,.
\]

There is a natural extension of the product rule for the forward difference operator and it can be applied just as before to obtain a recurrence relation. It will pay to invest in a little extra notation. We write 
\[
L+1=(l_i+1\colon i=1,\dots,n)
\]
and for a subset $S \subseteq \{1,\dots ,n\}$ we write
\[
L_S = (l_i\colon i \in S)\,,\quad L_{S}+1 = (l_i+1 \colon i \in S)\,,\quad L_{\overline{S}}=(l_i\colon i \in \{1,\dots ,n\}\setminus S)\,.
\]
The result is
\[
c^{(k+1)}_{L+1}=c^{(k)}_{L} -kc^{(k)}_{L+1}+
\sum_{m=1}^n\sum_{|S|=m}(\prod_{i\in S} l_i)c^{(k)}_{L_S+1,L_{\overline{S}}}\,.
\]

The generalization of a 2-dimensional conjoint ranking table allows for $n$ independent attributes (color, price, shape, \dots) with $l_i$ choices for the $i$'th attribute. Based on the recurrence one can then show that $L!c^{(k)}_L$ is the number of ways to fill in an $l_1\times l_2 \times \cdots \times l_n$ conjoint ranking table with the numbers $1$ through $k$, insisting, as before,  that each attribute must enter into the ranking at least once. Again this implies the nonnegativity of the $c$'s, more precisely
\[
\text{$c^{(k)}_L=0$ if $\prod_i l_i <k$ and $c^{(k)}_L>0$ if $\prod_i l_i  \ge k$.}
\]

Extensions of the alternate formulas \eqref{eq:formula-2} and \eqref{eq:formula-3} are more complicated to write. For the former, to keep the final result from being too cluttered we use the notation
\[
\|L\| = \sum_{l_i\in L} l_i 
\]
and for a multi-index $M = (m_1,m_2, \dots, m_n)$ with $1 \le m_i \le h$ and  $\|M\| = h$ we let
\[
\Phi(L,M) = \prod_{i=1}^n l_i - \sum_{r=1}^n \left(l_r\prod_{s < r}(l_s-m_s)\prod_{s>r}l_s\right)
\]
Then
\[
c^{(k)}_L=\frac{k!}{ L!} \sum_{h=0}^{\|L\|} (-1)^h\sum_{M,\|M\|=h} \binom{\Phi(L,M)}{k}\prod_{j=1}^n\binom{l_j}{m_j}\,.
\]

 While the derivation of this formula is only an extension of the argument for two variables, some discussion will help make the form clearer. Let ``slices" refer to the higher-dimensional generalization of the notion of rows (or columns) for matrices. As before, we use inclusion-exclusion to count $b^{(k)}_L$, the number of distinct $l_1 \times \ldots \times l_n$ $(0,1)$-tables with exactly $k$ ones and no zeroed slices; that's the formula without the factorials in front. In the outer summation, the index $h$ is the number of zeroed slices. The inner summation then runs over possible ways to distribute the $h$ zeroed slices across the $n$ different dimensions. $m_i$  is the number of zeroed slices in dimension $i$ and $\binom{l_i}{m_i}$ is the number of ways to select a particular set of $m_i$ slices to zero out. After counting ways to zero the slices, we must then count the number of ways to fill the remaining cells in the table with $k$ ones. The total number of cells in the array is $\prod_{i=1}^n l_i$ and $\Phi(L,M)$ counts the remaining  cells by subtracting the number of zeroed slices from the total number of cells. Then $\binom{\Phi(L,M)}{k}$ is the number of ways to distribute $k$ ones among these remaining cells. 
 
 The analog of the formula \eqref{eq:formula-3} based on matrix inversion is developed in \cite{ow:pascal}. It reads
 \[
 c^{(k)}_L=\frac{k!}{L!}\sum_{r_1,r_2,\dots,r_n=1}^k (-1)^{\sum_i(r_i+l_i)}\binom{\prod_ir_i}{k}\prod_i\binom{l_i}{r_i}\,.
 \]

\bibliography{fallingfactorials}
\end{document}